\newtheorem{thrm}{Theorem}
\newtheorem{lemma}{Lemma}
\begin{document}
\title[The global rigidity of a framework is not affine-invariant]{The global rigidity 
of a framework\\ is not an affine-invariant property}
\author{Victor Alexandrov}
\address{Sobolev Institute of Mathematics, Koptyug ave., 4, 
Novosibirsk, 630090, Russia and Department of Physics, 
Novosibirsk State University, Pirogov str., 2, Novosibirsk, 
630090, Russia}
\email{alex@math.nsc.ru}
\address{{}\hfill{March 9, 2019}}
\begin{abstract}
It is well-known that the property of a bar-and-joint framework `to be
infinitesimally rigid' is invariant under projective transformations of Eucliean $d$-space
for every $d\geqslant 2$.
It is less known that the property of a bar-and-joint framework `to be
globally rigid' is not invariant even under affine transformations of the Euclidean plane.
In this note, we prove of the latter statement for Euclidean $d$-space for every $d\geqslant 2$.
\par
\textit{Keywords}: bar-and-joint framework, globally rigid framework, 
Euclidean space, affine transformation, distance.
\par
\textit{Mathematics subject classification (2010)}:  Primary 52C25, Secondary 
05C90, 	51K05, 92E10. 	

\end{abstract}
\maketitle

\section{Introduction}\label{s1}

A bar-and-joint framework (or a straight-line realization of a graph) 
in $\mathbb{R}^d$, $d\geqslant 2$, is said to be globally rigid in 
$\mathbb{R}^d$ if it is congruent to every other bar-and-joint framework 
in $\mathbb{R}^d$ with the same edge lengths.

The problem of whether a given framework (or all frameworks
from a given family) is globally rigid was raised both 
in mathematics and in its applications.
As a purely mathematical problem it was raised, loosely speaking, 
in distance geometry (see, e.\,g., \cite{BS14,Co05,CW10,GHT10}), 
graph theory (see, e.\,g., \cite{He92,SS10,Ta15}),
matroid theory (see, e.\,g., \cite{JJ05})), etc.
Since frameworks are a natural model for real-world mechanisms 
and molecules, this problem appeared also in classical mechanics of
mechanisms (see, e.\,g., \cite{Ka10,St10}), 
the mechanics of microporous materials (see, e.\,g., \cite{TDKR14}), 
stereochemistry (see, e.\,g., \cite{He95}), 
molecular biology (see, e.\,g., \cite{AMG13,SSW14}), etc.

A special reason for our interest in globally rigid frameworks
is that, for infinitesimally rigid frameworks,
which may be treated as an infinitesimal analogue of
globally rigid frameworks, a classical theorem reads that the property
of a framework `to be infinitesimally rigid' is invariant under
projective transformations (see, e.\,g., \cite{Iz09,We84}).
In contrast, Theorem \ref{thrm1}, the main result of this note, reads that 
the property of a framework `to be globally rigid' is not invariant 
even under affine transformations.

We recall basic definitions and notation from geometric global rigidity 
of bar-and-joint frameworks (see, e.\,g., \cite{CW10}).

Let $d\geqslant 1$ be an integer and $G=(V,E)$ be a graph.
Here $V$ and $E$ are the sets of vertices and edges of $G$, respectively.

A {\it bar-and-joint framework\,} $G(\mathbf{p})$ 
in $\mathbb{R}^d$ (or a {\it framework}, for short) is a graph $G$ 
and a {\it configuration} $\mathbf{p}$ which assigns a point 
$\mathbf{p}_i\in\mathbb{R}^d$ to each vertex $i\in V$.
For each $i\in V$, $\mathbf{p}_i$ is called a {\it joint} 
of $G(\mathbf{p})$ and, for each edge $\{i, j\}\in E$, 
the staight-line segment with the endpoints $\mathbf{p}_i$ 
and $\mathbf{p}_j$ is called a {\it bar}.

Note that some authors use the term {\it straight-line realization
of a graph $G$} or {\it geometric realization
of a graph $G$} instead of the term bar-and-joint framework 
$G(\mathbf{p})$ (see, e.\,g., \cite{He92,JJ05}). 

We denote the Euclidean distance between 
$\mathbf{x}, \mathbf{y} \in\mathbb{R}^d$
by $|\mathbf{x}- \mathbf{y}|$.

Two frameworks $G(\mathbf{p})$ and $G(\mathbf{q})$
are {\it equivalent} to each other if 
$|\mathbf{p}_i-\mathbf{p}_j|=|\mathbf{q}_i-\mathbf{q}_j|$
for every  $\{i, j\}\in E$ and
are {\it congruent} to each other if 
$|\mathbf{p}_i-\mathbf{p}_j|=|\mathbf{q}_i-\mathbf{q}_j|$
for all  $i, j\in V$.

A framework $G(\mathbf{p})$ is {\it globally rigid in} 
$\mathbb{R}^d$ if all frameworks $G(\mathbf{q})$ in 
$\mathbb{R}^d$ which are equivalent to $G(\mathbf{p})$ 
are congruent to $G(\mathbf{p})$.

Let $A:\mathbb{R}^d\to\mathbb{R}^d$ be an affine transformation
and $G(\mathbf{p})$ be a framework in $\mathbb{R}^d$.
We write $A\mathbf{p}$ for a configuration of the graph $G$
given by the formulas $(A\mathbf{p})_k=A(\mathbf{p}_k)$, $k\in V$.
In the sequel, where this cannot cause misunderstanding, we 
write $A\mathbf{x}$ instead of $A(\mathbf{x})$
for $\mathbf{x}\in \mathbb{R}^d$.

The main result of this note is the following 

\begin{thrm}\label{thrm1}
For every $d\geqslant 2$, there is a framework $G_d(\mathbf{p})$
in $\mathbb{R}^d$ and an affine transformation
$A:\mathbb{R}^d\to\mathbb{R}^d$ such that
$G_d(\mathbf{p})$ is not globally rigid in $\mathbb{R}^d$ while 
$G_d(A\mathbf{p})$ is globally rigid in $\mathbb{R}^d$.
\end{thrm}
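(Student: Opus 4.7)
The plan is to first construct an explicit witnessing example in the plane ($d=2$), and then extend it to every $d \geq 3$ by a rigid-extension procedure.

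For the planar case, I would search for a small graph $G$ on a few vertices together with a configuration $\mathbf{p}$ admitting a second configuration $\mathbf{q}$ with the same edge lengths but a different non-edge distance; this witnesses that $G(\mathbf{p})$ is not globally rigid. Then I would pick a suitable non-isometric affine transformation $A$ (for instance a shear or anisotropic scaling) so that the transformed framework $G(A\mathbf{p})$ ends up in a special metric configuration in which every realization of the new edge lengths is congruent to $A\mathbf{p}$. The verification would proceed in two steps: (i) display the equivalent but non-congruent $\mathbf{q}$, and (ii) enumerate all realizations of the new edge lengths---pinning positions down by fixing an affinely spanning triple and then exhausting the finitely many possible ``flips'' of the remaining vertices---and check case by case that each such realization is congruent to $A\mathbf{p}$.

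The main difficulty is the choice of $(G, \mathbf{p}, A)$. The obvious candidate, a four-vertex framework consisting of two triangles sharing an edge (that is, $K_4$ with one edge removed), will \emph{not} work: for that graph, being globally rigid in the plane is equivalent to one of the two ``free'' vertices lying on the line through the shared edge, which is a collinearity condition preserved by every invertible affine map. So $G$ must be more elaborate, with the failure of global rigidity governed not by an affine invariant (collinearity, coplanarity, incidence) but by a genuinely metric condition---such as a coincidence of two non-edge distances, or a concyclicity---that can be destroyed or created by a suitable $A$. Finding such a pair $(G, \mathbf{p})$ and then ruling out \emph{every} potentially equivalent realization of $A\mathbf{p}$, rather than only the obvious flip candidates, is the technical heart of the proof.

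For $d \geq 3$, once the planar example is available I would embed it into a $2$-plane of $\mathbb{R}^d$ and adjoin $d-2$ auxiliary vertices, each joined by edges to a subset of the original vertices whose affine hull is all of $\mathbb{R}^d$, so that the positions of the new vertices in $\mathbb{R}^d$ are uniquely determined by their prescribed distances. The affine transformation would be extended to $\mathbb{R}^d$ by acting as the identity on the orthogonal complement of the $2$-plane. A short argument then shows that an equivalent (resp.\ congruent) $d$-dimensional framework restricts to an equivalent (resp.\ congruent) planar framework of the original example, so the planar dichotomy lifts unchanged to every dimension $d \geq 2$, yielding the theorem.
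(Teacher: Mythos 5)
Your planar step follows essentially the same strategy as the paper: exhibit a small graph with a flip (the paper uses a five-vertex graph in which vertex $1$ is joined to all others, vertex $2$ to $1,3,4$ and vertex $3$ to $1,2,5$, plus the edge $\{4,5\}$), show non-global-rigidity by reflecting vertices $2$ and $3$ across the lines $\mathbf{p}_1\mathbf{p}_4$ and $\mathbf{p}_1\mathbf{p}_5$, apply the anisotropic scaling $(x_1,x_2)\mapsto(x_1,x_2/2)$, and then enumerate the finitely many realizations of the graph with the edge $\{2,3\}$ deleted and check that the four candidate distances between the images of $2$ and $3$ are pairwise distinct. Your observation that $K_4$ minus an edge cannot work is correct. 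However, you do not actually produce the example, and, more importantly, your passage to $d\geqslant 3$ has a genuine gap. As written it is impossible: every subset of the original vertices lies in a $2$-plane, so none has affine hull equal to $\mathbb{R}^d$. Even after repairing this (say by adding auxiliary vertices one at a time), the determination runs the wrong way: making the positions of the \emph{new} vertices functions of the old ones does nothing to prevent the \emph{old} vertices from leaving the $2$-plane in an equivalent realization. Indeed the planar witness is already flexible as a framework in $\mathbb{R}^3$ (five joints, eight bars: $15-8-6=1$ internal degree of freedom), so an equivalent realization in $\mathbb{R}^d$ need not restrict to a planar --- hence two-dimensionally equivalent --- copy of the original, and the ``short argument'' you invoke does not exist. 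Forcing the original joints to stay in place would require joining each auxiliary vertex to essentially all original vertices, i.e.\ coning, which needs the Connelly--Whiteley coning theorem, machinery you do not invoke and which the paper deliberately avoids.

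The paper's route for $d\geqslant 3$ is different and instructive: it does not extend the planar framework but builds, for each $d$, a new graph $G_d$ on $2^{d-1}+3$ vertices. Vertex $1$ together with vertices $4,\dots,2^{d-1}+3$ forms a complete graph placed at $(0,2,0,\dots,0)$ and at the points $(\pm 1,0,w)$, where $w$ runs over the vertices of a $(d-2)$-cube; this base affinely spans $\mathbb{R}^d$ and is trivially globally rigid. Vertex $2$ (resp.\ $3$) is joined to vertex $1$ and to all base vertices with first coordinate $-1$ (resp.\ $+1$), a set that affinely spans a hyperplane, so each of vertices $2$ and $3$ has exactly two admissible positions (a point and its mirror image in that hyperplane), and the same four-case distance computation as in the plane closes the argument. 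To salvage your extension-style plan you would need either this kind of hyperplane-spanning attachment for the flexible vertices or a proof of a coning lemma; as it stands, the higher-dimensional half of your proof does not go through.
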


Note that, Theorem \ref{thrm1} is not new for the case $d=2$.
In fact, in \cite[Example 8.3]{CW10}, a configuration in $\mathbb{R}^2$ is constructed 
that is not globally rigid in $\mathbb{R}^2$, while its affine image is globally 
rigid in $\mathbb{R}^2$.
That example relies on the properties of coning and stress matrices, specific techniques 
for the study of global rigidity of bar-and-joint frameworks, developed in \cite{CW10}
and articles mentioned there.

In contrast, the proof of Theorem \ref{thrm1} given in Section \ref{s3} below
is valid in Euclidean spaces of all dimensions and is straightforward,
in particular, it does not involve coning.

\section{Preliminary considerations}\label{s2}

Let $G^*=(V^*, E^*)$ denote the graph such that
$V^*=\{1, 2, 3, 4, 5\}$ 
and
$E^*=\bigl\{\{1,2\}, \{1,3\}, \{1,4\}, \{1,5\}, \{2,3\}, \{2,4\}, \{3,5\}, \{4,5\}\bigr\}$.
$G^*$ is shown in Fig.~\ref{fig1}(a). 
Let $\mathbf{p}^*$ denote the configuration of  
$G^*$ in $\mathbb{R}^2$ which is given by the following formulas:
$\mathbf{p}^*_1=(0, 2)$,
$\mathbf{p}^*_2=(-1/4, 1/2)$,
$\mathbf{p}^*_3=(21/20, 9/10)$,
$\mathbf{p}^*_4=(-1, 0)$,
and
$\mathbf{p}^*_5=(1, 0)$.
$G(\mathbf{p}^*)$ is shown in Fig.~\ref{fig1}(b).

Let $A:\mathbb{R}^2\to\mathbb{R}^2$ 
be the affine transformation given by the formula
$A(x_1,x_2)=(x_1,{x_2}/2)$.

\begin{lemma}\label{l1}
Let the graph $G^*$, the framework $G^*(\mathbf{p}^*)$ and 
the affine transformation $A$ be as above in this Section. 
Then $G^*(\mathbf{p}^*)$ is not globally rigid in $\mathbb{R}^2$, 
while $G^*(A\mathbf{p}^*)$ is globally rigid in $\mathbb{R}^2$.
\end{lemma}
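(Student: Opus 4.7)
My plan is to verify the two assertions of the lemma separately. Throughout, the main structural observation is that $\{1,4\}$, $\{1,5\}$, and $\{4,5\}$ all lie in $E^*$, so the triangle on the vertex set $\{1,4,5\}$ is a rigid subframework of $G^*$; moreover, its realization in $\mathbf{p}^*$ (and in $A\mathbf{p}^*$) is non-degenerate, so any framework $G^*(\mathbf{q})$ equivalent to either one can, by a suitable isometry of $\mathbb{R}^2$, be normalized so that $\mathbf{q}_1,\mathbf{q}_4,\mathbf{q}_5$ coincide with the prescribed positions. Once this normalization is in force, $\mathbf{q}_2$ lies in the intersection of two circles centered at $\mathbf{q}_1$ and $\mathbf{q}_4$, and similarly $\mathbf{q}_3$ in the intersection of two circles centered at $\mathbf{q}_1$ and $\mathbf{q}_5$; each intersection consists of exactly two points (a point and its reflection across the line joining the two centers).

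For the \textbf{non-rigidity of $G^*(\mathbf{p}^*)$}, I would produce an explicit alternative configuration $\mathbf{q}$ that is equivalent but not congruent to $\mathbf{p}^*$. Set $\mathbf{q}_i:=\mathbf{p}^*_i$ for $i\in\{1,4,5\}$. The key arithmetic observation is that the reflection of $\mathbf{p}^*_2$ across the line $\mathbf{p}^*_1\mathbf{p}^*_4$ is exactly $(-21/20,9/10)$, and the reflection of $\mathbf{p}^*_3$ across the line $\mathbf{p}^*_1\mathbf{p}^*_5$ is exactly $(1/4,1/2)$. Define $\mathbf{q}_2:=(-21/20,9/10)$ and $\mathbf{q}_3:=(1/4,1/2)$. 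By the reflection property, $|\mathbf{q}_1-\mathbf{q}_2|$, $|\mathbf{q}_2-\mathbf{q}_4|$, $|\mathbf{q}_1-\mathbf{q}_3|$, $|\mathbf{q}_3-\mathbf{q}_5|$ match the corresponding lengths in $\mathbf{p}^*$, and the triangle $145$ is unchanged, so I am reduced to the one nontrivial check $|\mathbf{q}_2-\mathbf{q}_3|=|\mathbf{p}^*_2-\mathbf{p}^*_3|$, which is a direct computation. To show that $G^*(\mathbf{q})$ is not congruent to $G^*(\mathbf{p}^*)$, it suffices to exhibit a single non-edge pair whose length differs; the pair $\{2,5\}$ works cleanly.

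For the \textbf{global rigidity of $G^*(A\mathbf{p}^*)$}, write $\mathbf{r}:=A\mathbf{p}^*$ and let $G^*(\mathbf{q})$ be any framework equivalent to $G^*(\mathbf{r})$. By the setup paragraph, I may assume $\mathbf{q}_i=\mathbf{r}_i$ for $i\in\{1,4,5\}$. Then $\mathbf{q}_2\in\{\mathbf{r}_2,\mathbf{r}_2'\}$, where $\mathbf{r}_2'$ is the mirror of $\mathbf{r}_2$ across the line $\mathbf{r}_1\mathbf{r}_4$, and $\mathbf{q}_3\in\{\mathbf{r}_3,\mathbf{r}_3'\}$, where $\mathbf{r}_3'$ is the mirror of $\mathbf{r}_3$ across the line $\mathbf{r}_1\mathbf{r}_5$. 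The edge $\{2,3\}$ selects among the four candidate pairs $(\mathbf{q}_2,\mathbf{q}_3)$: I would compute the squared distance $|\mathbf{q}_2-\mathbf{q}_3|^2$ in each of the four cases and verify that only the original choice $(\mathbf{r}_2,\mathbf{r}_3)$ achieves the required value $|\mathbf{r}_2-\mathbf{r}_3|^2$. This pins down $\mathbf{q}=\mathbf{r}$, establishing global rigidity.

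The proof is thus a short symmetry argument combined with a finite collection of explicit numerical checks. Its conceptual content is the contrast between the two situations: before $A$ is applied, a special coincidence in the coordinates of $\mathbf{p}^*_2$ and $\mathbf{p}^*_3$ causes the ``double-reflection'' candidate to reproduce the same value of $|\cdot|^2$ on the edge $\{2,3\}$ as the original, producing the counterexample to global rigidity; after $A$, the contraction in the $x_2$-direction separates the four candidate squared distances so that only the original one matches. The sole obstacle I anticipate is the bookkeeping required to check that, after applying $A$, each of the three non-trivial candidate squared distances genuinely differs from $|\mathbf{r}_2-\mathbf{r}_3|^2$; this is routine but must be carried out in full since the whole argument rests on it.
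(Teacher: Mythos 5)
Your proposal is correct and follows essentially the same route as the paper: exhibit the equivalent-but-not-congruent configuration with $\mathbf{q}_2=(-21/20,9/10)$, $\mathbf{q}_3=(1/4,1/2)$ (exactly the paper's $\mathbf{q}^*$), using the non-edge pair $\{2,5\}$ to break congruence, and then pin the triangle $\{1,4,5\}$, reduce any framework equivalent to $G^*(A\mathbf{p}^*)$ to the four reflection candidates for $(\mathbf{q}_2,\mathbf{q}_3)$, and check that the four resulting values of $|\mathbf{q}_2-\mathbf{q}_3|$ are distinguished by the edge $\{2,3\}$. The only difference is presentational: you make explicit the normalization step the paper leaves implicit, while leaving the four distance computations (which the paper records as $\sqrt{173}/10$, $\sqrt{73}/10$, $\sqrt{233}/10$, $3\sqrt{37}/10$) to be carried out.
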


\begin{proof}
Let $\mathbf{q}^*$ be the configuration of $G^*$ in $\mathbb{R}^2$
which is given by the following formulas:
$\mathbf{q}^*_1=\mathbf{p}^*_1=(0, 2)$,
$\mathbf{q}^*_2=(-21/20, 9/10)$,
$\mathbf{q}^*_3=(1/4, 1/2)$,
$\mathbf{q}^*_4=\mathbf{p}^*_4=(-1, 0)$,
and
$\mathbf{q}^*_5=\mathbf{p}^*_5=(1,0)$.
$G^*(\mathbf{q}^*)$ is shown in Fig.~\ref{fig1}(c).
\begin{figure}
\begin{center}
\includegraphics[width=0.9\textwidth]{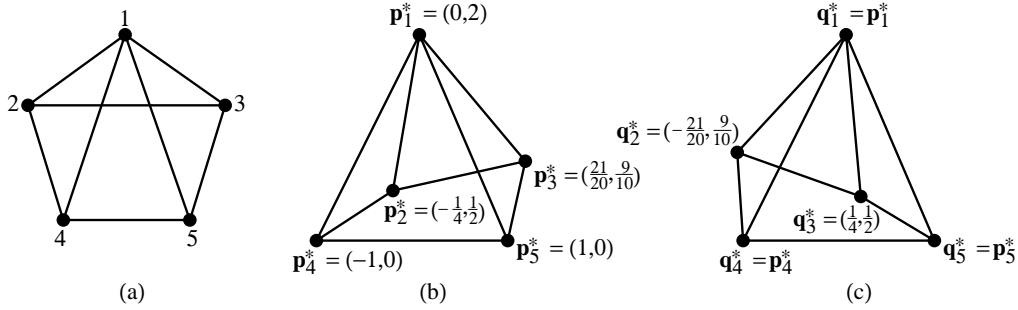}
\end{center}
\caption{(a): Graph $G^*$. (b): Framework $G^*(\mathbf{p}^*)$. (c): Framework
$G^*(\mathbf{q}^*)$.}\label{fig1}
\end{figure}

We can say that $G^*(\mathbf{q}^*)$ 
is obtained from $G^*(\mathbf{p}^*)$ by replacing  
the joint $\mathbf{p}^*_2$ by the joint $\mathbf{q}^*_2$ 
and by replacing the joint $\mathbf{p}^*_3$ by the joint 
$\mathbf{q}^*_3$.
Note that $\mathbf{q}^*_2$ is chosen in such a way
that it is symmetrical to $\mathbf{p}^*_2$ 
with respect to the line $\mathbf{p}^*_1\mathbf{p}^*_4$ 
(here and subsequently $\mathbf{x}\mathbf{y}$ denotes 
the straight line passing through the points
$\mathbf{x},\mathbf{y}\in \mathbb{R}^2$).
Similarly,  $\mathbf{q}^*_3$ is chosen in such a way
that it is symmetrical to $\mathbf{p}^*_3$ 
with respect to the line $\mathbf{p}^*_1\mathbf{p}^*_5$.

Using the symmetry of some parts of the framework
$G^*(\mathbf{p}^*)$, it is easy to see that 
$|\mathbf{q}^*_2-\mathbf{q}^*_3| = |\mathbf{p}^*_2-\mathbf{p}^*_3|$.
Hence, the frameworks  
$G^*(\mathbf{p}^*)$ 
and
$G^*(\mathbf{q}^*)$ 
are equivalent to each other. 
However, we can arrive at the same conclusion by direct calculating
the lengths of all bars of these frameworks.

On the other hand, $G^*(\mathbf{p}^*)$ 
and $G^*(\mathbf{q}^*)$ 
are not congruent to each other since
$|\mathbf{p}^*_2-\mathbf{p}^*_5| \neq |\mathbf{q}^*_2-\mathbf{q}^*_5|$.
In fact, direct calculations show that
$|\mathbf{p}^*_2-\mathbf{p}^*_5| = \sqrt{29}/4$ 
and
$|\mathbf{q}^*_2-\mathbf{q}^*_5| = \sqrt{2005}/20$.

Hence, $G^*(\mathbf{p}^*)$ is globally rigid in $\mathbb{R}^2$.

Let $\widetilde{G}^*=(\widetilde{V}^*, \widetilde{E}^*)$ 
denote the graph such that
$\widetilde{V}^*=V^*$ 
and
$\widetilde{E}^*=E^*\setminus\bigl\{\{2, 3\}\bigr\}$.
$\widetilde{G}^*$ is shown in Fig.~\ref{fig2}(a).
\begin{figure}
\begin{center}
\includegraphics[width=0.9\textwidth]{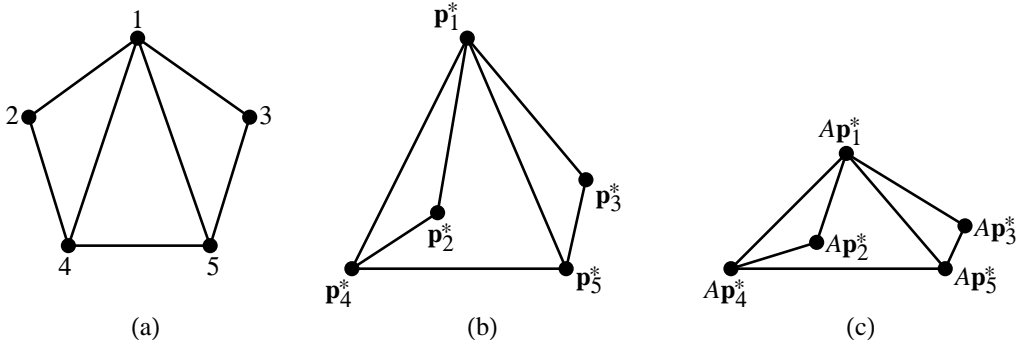}
\end{center}
\caption{(a): Graph $\widetilde{G}^*$. (b): Framework $\widetilde{G}^*(\mathbf{p}^*)$. (c): Framework
$\widetilde{G}^*(A\mathbf{p}^*)$.}\label{fig2}
\end{figure}
The configuration $\mathbf{p}^*$ of the graph $G^*$
and the affine transformation $A:\mathbb{R}^2\to\mathbb{R}^2$
constructed above in this Section define also the frameworks 
$\widetilde{G}^*(\mathbf{p}^*)$ 
and
$\widetilde{G}^*(A\mathbf{p}^*)$,
which are shown in Fig.~\ref{fig2}(b) and Fig.~\ref{fig2}(c),
respectively.

Every framework in $\mathbb{R}^2$, which is equivalent to
$\widetilde{G}^*(A\mathbf{p}^*)$, is congruent to one of the frameworks
shown in Fig.~\ref{fig3}, where
$\mathbf{x}_2=(-3/4, 3/4)$ 
and
$\mathbf{x}_3=(11/20, -1/20)$.
\begin{figure}
\begin{center}
\includegraphics[width=1\textwidth]{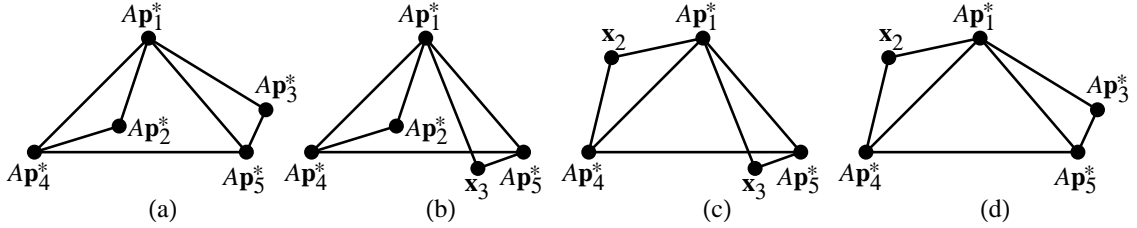}
\end{center}
\caption{Any framework equivalent to $\widetilde{G}^*(A\mathbf{p}^*)$ is congruent to one of
the frameworks shown in (a)--(d).}\label{fig3}
\end{figure}
Note that the points $\mathbf{x}_2$ and $\mathbf{x}_3$ 
are chosen in such a way that $\mathbf{x}_2$ and $A\mathbf{p}_2^*$ 
are symmetrical to each other with respect to the line 
$A\mathbf{p}_1^*A\mathbf{p}_4^*$, and
$\mathbf{x}_3$ and $A\mathbf{p}_3^*$
are symmetrical to each other with respect to the line
$A\mathbf{p}_1^*A\mathbf{p}_5^*$.

Direct calculations show that
$|A\mathbf{p}_2^*-A\mathbf{p}_3^*|=\sqrt{173}/10$
(see Fig.~\ref{fig3}(a)),
$|A\mathbf{p}_2^*-\mathbf{x}_3|=\sqrt{73}/10$
(see Fig.~\ref{fig3}(b)),
$|\mathbf{x}_2-\mathbf{x}_3|=\sqrt{233}/10$
(see Fig.~\ref{fig3}(c)),
and
$|\mathbf{x}_2-A\mathbf{p}_3^*|=3\sqrt{37}/10$
(see Fig.~\ref{fig3}(d)).
Since among these numbers there are no two equal, 
the framework $G^*(A\mathbf{p}^*)$ is globally rigid in $\mathbb{R}^2$.
For the convenience of the reader $G^*(A\mathbf{p}^*)$ 
is shown in Fig.~\ref{fig4}(a).
\end{proof}

\section{Proof of Theorem~\ref{thrm1}}\label{s3}

In the proof of Theorem~\ref{thrm1} given below, we mainly use
the same ideas that were used in Section~\ref{s2} in the proof of Lemma~\ref{l1}.
The novelty is in details that are needed for transition from the plane to space.

\begin{proof}
Let $\widetilde{G}_d=(\widetilde{V}_d,\widetilde{E}_d)$
denote the graph such that $\widetilde{V}_d=\{1,2,3,4,\dots, 2^{d-1}+2, 2^{d-1}+3\}$
and an unordered pair of non-coincident vertices 
$i,j$ is an edge of $\widetilde{G}_d$ (i.\,e. $\{i,j\}\in \widetilde{E}_d$)
if and only if one of the conditions (i)--(iii) is fulfilled:
 
\begin{figure}
\begin{center}
\includegraphics[width=0.6\textwidth]{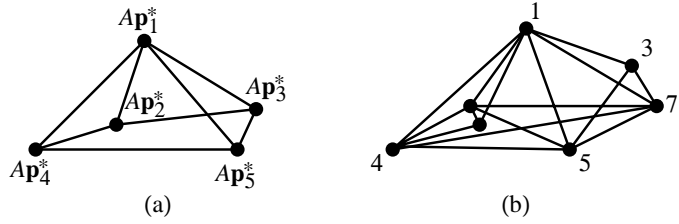}
\end{center}
\caption{(a): Globally rigid framework
$G^*(A\mathbf{p}^*)=G_2(\mathbf{p}^*)$. (b): Graph
$G_3$ (vertices $2$ and $6$ are left unmarked intentionally).}\label{fig4}
\end{figure}

(i) $i,j\in \{1, 4, 5, \dots, 2^{d-1}+2, 2^{d-1}+3\}$ 
(i.\,e. the complete graph with the vertices 
$1, 4, 5, \dots, 2^{d-1}+2, 2^{d-1}+3$ 
is contained in $\widetilde{G}_d$);

(ii) the unordered pair $\{i,j\}$ coincides with either $\{1, 2\}$
or $\{2, 2k\}$, where $2\leqslant k \leqslant 2^{d-2}+1$
(i.\,e. the vertex 2 is connected by an edge to each of the vertices
$1, 4, 6, \dots, 2k, \dots, 2^{d-1}+2$);

(iii) the unordered pair $\{i,j\}$ coincides with either $\{1, 3\}$, 
or $\{3, 2k+1\}$, where $2\leqslant k \leqslant 2^{d-2}+1$
(i.\,e. the vertex 3 is connected by an edge to each of the vertices
$1, 5, 7, \dots, 2k+1, \dots, 2^{d-1}+3$).

Let $G_d=(V_d,E_d)$ denote the graph such that
$V_d=\widetilde{V}_d$ 
and
$E_d=\widetilde{E}_d\cup \bigl\{\{2,3\}\bigr\}$.
$G_2$ is shown in Fig.~\ref{fig1}(a), 
$\widetilde{G}_2$ is shown in Fig.~\ref{fig2}(a),
and $G_3$ is shown in Fig.~\ref{fig4}(b).

The points $(\pm 1,\pm 1, \dots, \pm 1)\in \mathbb{R}^{d-2}$
are the vertices of a cube with edge length 2.
We enumerate them in an arbitrary way using the numbers
$1\leqslant k\leqslant 2^{d-2}$ and denote them by $w_k$.
So, $w_k=(\pm 1,\pm 1, \dots, \pm 1)$ 
with the proper selection of plus and minus signs.

Let $\mathbf{p}$ denote the configuration of the graphs 
$G_d$ and $\widetilde{G}_d$ in $\mathbb{R}^d$ that 
is given by the following formulas:
$\mathbf{p}_1=(0, 2, 0, \dots, 0)$, 
$\mathbf{p}_2=(-1/4, 1/2, 0, \dots, 0)$, 
$\mathbf{p}_3=(21/20, 9/10, 0, \dots, 0)$, 
$\mathbf{p}_{2j}=(-1, 0, w_{j-1})$, 
and
$\mathbf{p}_{2j+1}=(1,0,w_{j-1})$,
where $2\leqslant j\leqslant 2^{d-2}+1$.

Let $\mathbf{q}$ denote the configuration of the graphs 
$G_d$ and $\widetilde{G}_d$ in $\mathbb{R}^d$ that 
is given by the following formulas: 
$\mathbf{q}_1=\mathbf{p}_1$, 
$\mathbf{q}_2=(-21/20, 9/10, 0, \dots, 0)$, 
$\mathbf{q}_3=(1/4, 1/2, 0, \dots, 0)$, 
and
$\mathbf{q}_k=\mathbf{p}_k$,
where $k=4,5,\dots, 2^{d-1}+2, 2^{d-1}+3$.

Note that $\mathbf{q}_2$ is chosen in such a way
that it is symmetrical to $\mathbf{p}_2$
with respect to the hyperplane in $\mathbb{R}^d$
passing through the points
$\mathbf{p}_1$, $\mathbf{p}_4$, $\mathbf{p}_6$, \dots ,
$\mathbf{p}_{2^{d-1}}$, $\mathbf{p}_{2^{d-1}+2}$. 
Similarly, $\mathbf{q}_3$ is chosen in such a way
that it is symmetrical to $\mathbf{p}_3$
with respect to the hyperplane in $\mathbb{R}^d$
passing through the points 
$\mathbf{p}_1$, $\mathbf{p}_5$, $\mathbf{p}_7$, \dots ,
$\mathbf{p}_{2^{d-1}+1}$, $\mathbf{p}_{2^{d-1}+3}$.

As in the proof of Lemma~\ref{l1}, a direct verification
shows that the frameworks $G_d(\mathbf{p})$ and $G_d(\mathbf{q})$
are equivalent to each other, but not congruent.
Hence, $G_d(\mathbf{p})$ is not globally rigid.

Let $A:\mathbb{R}^d\to\mathbb{R}^d$
be the affine transformation given by the formula
$A(x_1, x_2, x_3, \dots, x_d) = (x_1, {x_2}/2, x_3, \dots, x_d)$
(i.\,e. $A$ is the contraction with the factor of 2 along 
the second axis of $\mathbb{R}^d$).

Let $\mathbf{r}$ denote the configuration of the graphs 
$G_d$ and $\widetilde{G}_d$ in $\mathbb{R}^d$ that 
is given by the following formulas: 
$\mathbf{r}_1=A\mathbf{p}_1$, 
$\mathbf{r}_2=A\mathbf{p}_2$, 
$\mathbf{r}_3=(11/2, -1/20, 0, \dots, 0)$, 
and
$\mathbf{r}_k=A\mathbf{p}_k$,
where $k=4,5,\dots, 2^{d-1}+2, 2^{d-1}+3$.
Note that $\mathbf{r}_3$ is symmetrical to 
$A\mathbf{p}_3$ with respect to the hyperplane in $\mathbb{R}^d$
passing through the points
$A\mathbf{p}_1$, $A\mathbf{p}_5$, $A\mathbf{p}_7$, \dots ,
$A\mathbf{p}_{2^{d-1}+1}$, $A\mathbf{p}_{2^{d-1}+3}$.

Let $\mathbf{s}$ denote the configuration of the graphs 
$G_d$ and $\widetilde{G}_d$ in $\mathbb{R}^d$ that 
is given by the following formulas:  
$\mathbf{s}_1=A\mathbf{p}_1$, 
$\mathbf{s}_2=(-3/4, 3/4, 0, \dots, 0)$, 
$\mathbf{s}_3=\mathbf{r}_3$, 
and
$\mathbf{s}_k=A\mathbf{p}_k$,
where $k=4,5,\dots, 2^{d-1}+2, 2^{d-1}+3$.
Note that $\mathbf{s}_2$ is symmetrical to 
$A\mathbf{p}_2$  with respect to the hyperplane in $\mathbb{R}^d$,
passing through the points
$A\mathbf{p}_1$, $A\mathbf{p}_4$, $A\mathbf{p}_6$, \dots ,
$A\mathbf{p}_{2^{d-1}}$, $A\mathbf{p}_{2^{d-1}+2}$.

Finally, let $\mathbf{t}$ denote the configuration of the graphs 
$G_d$ and $\widetilde{G}_d$ in $\mathbb{R}^d$ that 
is given by the following formulas:  
$\mathbf{t}_1=A\mathbf{p}_1$, 
$\mathbf{t}_2=\mathbf{s}_2$, 
$\mathbf{t}_3=\mathbf{r}_3$, 
and
$\mathbf{t}_k=A\mathbf{p}_k$,
where $k=4,5,\dots, 2^{d-1}+2, 2^{d-1}+3$.

As in the proof of Lemma~\ref{l1}, 
we make sure that any framework equivalent to 
$\widetilde{G}_d(A\mathbf{p})$ in $\mathbb{R}^d$ 
is congruent to one of the frameworks 
$\widetilde{G}_d(A\mathbf{p})$,
$\widetilde{G}_d(\mathbf{r})$, 
$\widetilde{G}_d(\mathbf{s})$, or 
$\widetilde{G}_d(\mathbf{t})$.
Direct calculations show that
$|A\mathbf{p}_2-A\mathbf{p}_3|=\sqrt{173}/10$,
$|\mathbf{r}_2-\mathbf{r}_3|=\sqrt{73}/10$,
$|\mathbf{s}_2-\mathbf{s}_3|=\sqrt{233}/10$,
and
$|\mathbf{t}_2-\mathbf{t}_3|=3\sqrt{37}/10$.
Since among these numbers there are no two equal,
none of the frameworks
$G_d(A\mathbf{p})$,
$G_d(\mathbf{r})$,
$G_d(\mathbf{s})$,
and
$G_d(\mathbf{t})$
are equivalent to each other.
Consequently, 
$G_d(A\mathbf{p})$
is globally rigid.
\end{proof}

\section{Acknowledgements}\label{s4}

The author is indebted to Professor Steven Gortler for bringing his attention to 
Example 8.3 in \cite{CW10}.

\bibliographystyle{plain}
\bibliography{alex_bib}
\end{document}